\theoremstyle{definition}
\newtheorem{definition}{Definition}[section]
\newtheorem*{assumption*}{Assumption}
\newtheorem*{condition*}{Condition}
\newtheorem*{example}{Example}
\theoremstyle{plain}
\newtheorem{theorem}[definition]{Theorem}
\newtheorem{proposition}[definition]{Proposition}
\newtheorem{lemma}[definition]{Lemma}
\theoremstyle{remark}
\newcommand{\R}{\mathbb{R}}
\newcommand{\E}{\mathbb{E}}
\newcommand{\Var}{\operatorname{Var}}
\newcommand{\iid}{\overset{iid}{\sim}}
\title{Empirical Orlicz norms} 
\author{Fabian Mies\thanks{Correspondence: f.mies@tudelft.nl\\ This publication is part of the project VI.Veni.242.365 of the NWO Talent Programme which is financed by the Dutch Research Council (NWO) under the grant https://doi.org/10.61686/AGAFX90293.}\\ Delft University of Technology}
\begin{document}

\maketitle

\begin{abstract}
The empirical Orlicz norm based on a random sample is defined as a natural estimator of the Orlicz norm of a univariate probability distribution. 
A law of large numbers is derived under minimal assumptions. 
The latter extends readily to a linear and a nonparametric regression model.
Secondly, sufficient conditions for a central limit theorem with a standard rate of convergence are supplied. 
The conditions for the CLT exclude certain canonical examples, such as the empirical sub-Gaussian norm of normally distributed random variables.
For the latter, we discover a nonstandard rate of $n^{1/4} \log(n)^{3/8}$, with a heavy-tailed, stable limit distribution.
It is shown that in general, the empirical Orlicz norm does not admit any uniform rate of convergence for the class of distributions with bounded Orlicz norm.

\textit{Keywords:} asymptotic distribution, sub-Gaussian random variables, stable distribution

\end{abstract}

\section{Motivation}

The Orlicz norm of a random variable $X$ taking values in a Banach space is defined as
\begin{align*}
    \|X\|_{\psi} = \inf\left\{ \sigma>0\,\Big|\, \E \psi\left(\tfrac{|X|}{\sigma}\right) \leq 1 \right\},
\end{align*}
where $\psi:[0,\infty)\to [0,\infty)$ is an Orlicz function, i.e.\ $\psi$ is increasing and convex such that $\psi(0)=0$ and $\psi(x)\to\infty$ as $x\to\infty$. 
These norms provide a convenient framework to formulate tail bounds, as Markov's inequality implies $P(|X|>t)\leq 1/\psi(t/\|X\|_\psi)$.
For this reason, Orlicz norms are widely employed in probability and data science, e.g.\ in high-dimensional probability theory (\cite{Vershynin2003}), empirical process theory and chaining arguments (\cite{van_der_vaart_weak_2023,van_de_geer_bernsteinorlicz_2013}), online learning (\cite{baby_online_2019}), shape-constrained inference (\cite{bellec_sharp_2018}), and multi-armed bandit problems (\cite{bubeck_regret_2012,hao_bootstrapping_2019}).
For the specific choice $\psi(x)=|x|^p$, we recover the usual $L_p$ norm. 
Another prominent special case are the exponential Orlicz norms with $\psi_\alpha(x)=\exp(|x|^\alpha)-1$ for $\alpha\geq 1$, corresponding to the class of sub-Weibull distributions \citep{vladimirova_subweibull_2020,zajkowski_norms_2020,kuchibhotla_moving_2022}.
In particular, the Orlicz function $\psi_1(x) = \exp(|x|)-1$ quantifies sub-exponential tails, and $\psi_2(x)=\exp(x^2)-1$ corresponds to sub-Gaussian distributions. 
Sub-Gaussianity has found direct use in statistics, e.g.\ as a quality criterion for robust mean estimators (\cite{Devroye2016,lugosi_sub-gaussian_2019,hopkins_mean_2020}, as a tightness condition to ensure weak convergence of certain multiscale test statistics \citep{kohne_at_2025}, and for threshold selection of  sequential monitoring procedures \citep{yu_note_2023}.

Although Orlicz-type tail bounds, and in particular sub-Gaussian bounds, are common assumptions for the asymptotic analysis of statistical methods, its empirical validation has surprisingly not been studied in the literature. 
Here, we consider estimation of the Orlicz norm $\|X\|_\psi$ based on a sample of $n$ iid random variables $X_1,\ldots, X_n\sim X$ via the natural estimator 
\begin{align*}
    \widehat{\sigma}_\psi(X_1,\ldots, X_n) = \inf\left\{ \sigma>0\,\Big|\, \frac{1}{n}\sum_{i=1}^n \psi\left(\tfrac{|X_i|}{\sigma}\right) \leq 1 \right\}.
\end{align*}
We refer to this estimator as the \textit{empirical Orlicz norm}.
Note that due to monotonicity, the empirical Orlicz norm can be computed efficiently via bisection.
We find that this estimator is generally consistent, while a rate of convergence is only available under additional assumptions. 
Even for standard Gaussian observations, the empirical sub-Gaussian norm is not asymptotically normal and exhibits a non-standard rate of convergence. 
While this demonstrates that estimation of Orlicz norms is in principle feasible, it also unveils interesting probabilistic phenomena.
The intention of this note is not to provide a complete statistical methodology, but to illustrate these somewhat unexpected behaviors of a natural estimator.

\section{Law of large numbers}

Our first result is a law of large numbers for the empirical Orlicz, which only requires that $\|X\|_{\psi}<\infty$.

\begin{theorem}[Law of large numbers]\label{thm:LLN}
    For $X_1,\ldots, X_n\iid X$ such that $\|X\|_{\psi}<\infty$,
    \begin{align*}
        \widehat{\sigma}_\psi(X_1,\ldots, X_n) \overset{n\to\infty}{\longrightarrow} \sigma_\psi=\|X\|_\psi \quad \text{almost surely.}
    \end{align*}
\end{theorem}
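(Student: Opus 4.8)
The plan is to recast the estimator through the monotone objective functions
\begin{align*}
\Phi(\sigma) = \E\,\psi\!\left(\tfrac{|X|}{\sigma}\right), \qquad \widehat\Phi_n(\sigma) = \frac1n\sum_{i=1}^n \psi\!\left(\tfrac{|X_i|}{\sigma}\right),
\end{align*}
so that $\sigma_\psi = \inf\{\sigma>0: \Phi(\sigma)\le 1\}$ and $\widehat\sigma_\psi = \inf\{\sigma>0: \widehat\Phi_n(\sigma)\le 1\}$. Since $\psi$ is increasing, both maps are non-increasing in $\sigma$, and monotonicity alone yields the two one-sided implications that drive the whole argument: $\widehat\Phi_n(\sigma)\le 1$ forces $\widehat\sigma_\psi\le\sigma$, while $\widehat\Phi_n(\sigma)>1$ forces $\widehat\sigma_\psi\ge\sigma$. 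Thus it suffices to control the scalar sequence $\widehat\Phi_n(\sigma)$ at countably many fixed arguments $\sigma$ straddling $\sigma_\psi$.

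First I would establish a pointwise strong law. For each fixed $\sigma>0$ the summands $\psi(|X_i|/\sigma)$ are i.i.d.\ nonnegative copies of $\psi(|X|/\sigma)$. If $\Phi(\sigma)<\infty$, Kolmogorov's SLLN gives $\widehat\Phi_n(\sigma)\to\Phi(\sigma)$ almost surely; if $\Phi(\sigma)=\infty$, truncating at level $M$, applying the SLLN to $\min(\psi(|X_i|/\sigma),M)$, and then letting $M\to\infty$ by monotone convergence gives $\widehat\Phi_n(\sigma)\to\infty$ almost surely.

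Next I would pin down the population function on either side of the threshold. By definition of the infimum, $\Phi(\sigma)>1$ for every $\sigma<\sigma_\psi$ (possibly $=\infty$). For the other side, the finiteness hypothesis $\|X\|_\psi<\infty$ is exactly what makes $\psi(|X|/\sigma_\psi)$ integrable, so that monotone convergence as $\sigma\downarrow\sigma_\psi$ gives $\Phi(\sigma_\psi)\le 1$, and $\psi(|X|/\sigma)\downarrow 0$ as $\sigma\to\infty$ gives $\Phi(\sigma)\to 0$. Moreover $\sigma\mapsto\psi(|x|/\sigma)$ is convex (a convex increasing outer function composed with the convex map $\sigma\mapsto|x|/\sigma$), so $\Phi$ is convex; a convex, non-increasing function tending to $0$ is strictly decreasing wherever it is positive. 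Hence for every $\sigma>\sigma_\psi$ one has the strict inequality $\Phi(\sigma)<\Phi(\sigma_\psi)\le 1$ (and $\Phi(\sigma)=0$ in the degenerate case $\Phi(\sigma_\psi)=0$). Ruling out a flat plateau of $\Phi$ at the level $1$ is the crux: without this strictness the empirical crossing point could overshoot, and this is where I expect the main, if modest, difficulty to lie.

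Finally I would assemble the bounds. Fix a countable sequence $\eps_k\downarrow 0$. Applying the pointwise SLLN at $\sigma=\sigma_\psi+\eps_k$, where $\Phi<1$ strictly, shows $\widehat\Phi_n(\sigma_\psi+\eps_k)<1$ eventually, hence $\widehat\sigma_\psi\le\sigma_\psi+\eps_k$ eventually; applying it at $\sigma=\sigma_\psi-\eps_k$ (in either the finite- or infinite-mean case), where $\Phi>1$, shows $\widehat\Phi_n(\sigma_\psi-\eps_k)>1$ eventually, hence $\widehat\sigma_\psi\ge\sigma_\psi-\eps_k$ eventually. Each statement holds off a null set; intersecting the countably many null sets and letting $k\to\infty$ yields $\widehat\sigma_\psi\to\sigma_\psi$ almost surely. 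The trivial cases $X=0$ almost surely and $\sigma_\psi=0$ require only the upper bound and are handled the same way.
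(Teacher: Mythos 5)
Your proposal is correct and follows essentially the same route as the paper: apply the pointwise strong law to the empirical objective at arguments straddling $\sigma_\psi$, use strict monotonicity of the population objective to get strict inequalities on either side, and sandwich $\widehat\sigma_\psi$. You supply somewhat more detail than the paper (the truncation argument when $\E\psi(|X|/\sigma)=\infty$ and the convexity argument ruling out a plateau at level $1$), but the underlying argument is identical.
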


There are various statistical problems beyond the iid setting where control of Orlicz norms is necessary. 
For instance, the consistent variable selection of the LASSO method for linear regression requires the regularization parameter to be large enough compared to the sub-Gaussian norm of the errors \citep{wainwright_sharp_2009}.
The empirical Orlicz norm may be adapted to yield a consistent estimator in this model.
To this end, suppose that we observe 
\begin{align}
    Y_i=\beta^T Z_i + \epsilon_i, \qquad i=1,\ldots, n,
\end{align}
for a coefficient vector $\beta\in\R^d$, covariates $Z_i\in \R^d$, and iid centered error terms $\epsilon_i\in\R$. 
The Orlicz norm $\sigma_\psi=\|\epsilon\|_{\psi}$ of the noise can be estimated consistently by the residuals of a linear regression fit, i.e.\ as
\begin{align*}
    \widehat{\sigma}_{\psi, LM} 
    &= \widehat{\sigma}_{\psi}\left(Y_1-\widehat{\beta}^TZ_1,\ldots, Y_n-\widehat{\beta} Z_n\right),
\end{align*}
where $\widehat{\beta}$ is an estimator of the regression coefficients.

\begin{theorem}\label{thm:LM}
    Suppose that $Z_i$ are iid, and that $\widehat{\beta}\to \beta$ almost surely as $n\to\infty$.
    If $\sigma_\psi=\|\epsilon\|_{\psi}<\infty$ and $\|Z_{i,j}\|_\psi < \infty$ for each coordinate $j=1,\ldots, d$, then
    \begin{align*}
        \widehat{\sigma}_{\psi, \textsc{lm}} \to \sigma_\psi \qquad \text{almost surely as } n\to\infty.
    \end{align*}
\end{theorem}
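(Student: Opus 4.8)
The plan is to exploit that, for a fixed sample size $n$, the functional $a=(a_1,\dots,a_n)\mapsto \widehat{\sigma}_\psi(a)$ is exactly the Luxemburg norm on $\R^n$ associated with $\psi$ and the normalized counting measure. Because $\psi$ is convex with $\psi(0)=0$, this functional is positively homogeneous and satisfies the triangle inequality. Homogeneity, $\widehat{\sigma}_\psi(ca)=|c|\,\widehat{\sigma}_\psi(a)$, is immediate from the substitution $\sigma\mapsto|c|\sigma$ in the defining infimum; the triangle inequality follows from the standard convexity argument of testing the value $\sigma_a+\sigma_b$ in the definition of $\widehat{\sigma}_\psi(a+b)$ and applying $\psi(\lambda u+(1-\lambda)v)\le\lambda\psi(u)+(1-\lambda)\psi(v)$ with $\lambda=\sigma_a/(\sigma_a+\sigma_b)$, where $\sigma_a=\widehat{\sigma}_\psi(a)$ and $\sigma_b=\widehat{\sigma}_\psi(b)$. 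This structural fact is what converts the claim into a perturbation estimate.

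Writing $\delta=\widehat{\beta}-\beta$, the residuals satisfy $Y_i-\widehat{\beta}^TZ_i=\epsilon_i-\delta^TZ_i$, so the reverse triangle inequality for $\widehat{\sigma}_\psi$ gives
\[
\bigl|\widehat{\sigma}_{\psi,\textsc{lm}}-\widehat{\sigma}_\psi(\epsilon_1,\dots,\epsilon_n)\bigr|\le \widehat{\sigma}_\psi(\delta^TZ_1,\dots,\delta^TZ_n).
\]
By Theorem \ref{thm:LLN} applied to the iid errors with $\|\epsilon\|_\psi<\infty$, the term $\widehat{\sigma}_\psi(\epsilon_1,\dots,\epsilon_n)$ converges to $\sigma_\psi$ almost surely, so it suffices to show the right-hand side vanishes almost surely. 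For this I would use homogeneity and subadditivity coordinatewise: since $\delta^TZ_i=\sum_{j=1}^d\delta_jZ_{i,j}$,
\[
\widehat{\sigma}_\psi(\delta^TZ_1,\dots,\delta^TZ_n)\le \sum_{j=1}^d|\delta_j|\,\widehat{\sigma}_\psi(Z_{1,j},\dots,Z_{n,j}).
\]
For each fixed $j$ the sequence $Z_{1,j},Z_{2,j},\dots$ is iid with $\|Z_{1,j}\|_\psi<\infty$, so Theorem \ref{thm:LLN} yields $\widehat{\sigma}_\psi(Z_{1,j},\dots,Z_{n,j})\to\|Z_{1,j}\|_\psi<\infty$ almost surely; in particular these empirical norms are almost surely bounded in $n$. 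Combined with $|\delta_j|\to0$ almost surely and the fact that $d$ is fixed, each summand, and hence the whole sum, tends to $0$ almost surely, which completes the argument.

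The point requiring care is that $\widehat{\beta}$ is an arbitrary consistent estimator that may depend on the entire sample, so $\delta$ is coupled with the covariates $Z_{i,j}$ in a complicated way; one cannot treat $\delta$ as a deterministic vanishing sequence. The reason this causes no difficulty is that the bound above is entirely pathwise: on a single event of full probability, $|\delta_j|\to0$ while $\widehat{\sigma}_\psi(Z_{1,j},\dots,Z_{n,j})$ converges to a finite limit, so their product tends to zero irrespective of the coupling. Consequently the genuine obstacle is not analytic but structural, namely verifying that $\widehat{\sigma}_\psi$ is a norm so that the triangle and reverse-triangle inequalities are legitimately available already at finite $n$; once that is established, the conclusion is a routine combination of Theorem \ref{thm:LLN} with homogeneity.
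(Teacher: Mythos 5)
Your proposal is correct and follows essentially the same route as the paper's proof: decompose the residuals as $\epsilon_i$ plus a perturbation $(\widehat{\beta}-\beta)^T Z_i$, control the perturbation via the (reverse) triangle inequality and coordinatewise homogeneity of the empirical Orlicz norm, and apply Theorem \ref{thm:LLN} to the errors and to each covariate coordinate. Your explicit verification that $\widehat{\sigma}_\psi$ is a Luxemburg norm at finite $n$, and your remark that the argument is pathwise and therefore insensitive to the coupling between $\widehat{\beta}$ and the covariates, make precise two points the paper leaves implicit.
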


The empirical Orlicz norm can also be applied to the nonparametric regression model 
\begin{align}
    Y_i = \mu_i +\epsilon_i,
\end{align}
for iid centered error terms $\epsilon_i$, and a nonparametric signal $\mu$. 
As usual in nonparametric regression, some regularity assumptions for the signal $i\mapsto \mu_i$ are necessary. 
We formulate these assumptions in terms of the exceedence numbers 
\begin{align*}
    \mathcal{E}_n(\mu,r)=\sum_{i=2}^n \mathds{1}(|\mu_i-\mu_{i-1}|>r), \qquad r>0.
\end{align*}
In the more common regression setup $\mu_i=f_i=f(i/n)$ for some function $f:[0,1]\to\R$, continuity of $f$ implies that $\mathcal{E}_n(f,r)\to 0$ as $n\to\infty$, for any $r>0$.
Moreover, if $f$ is right-continuous with left-hand limits, then 
\begin{align}
    \limsup_{n\to\infty} \mathcal{E}_{n}(\mu,r) \leq \mathcal{E}^*(r) \overset{r\to\infty}{\longrightarrow}0. \label{eqn:regularity}
\end{align}
See \cite[Lemma 1]{Billingsley1999} for the corresponding property of cadlag functions. 
This regularity assumption is much milder than requiring the function $f$ to be Hölder continuous, as common in nonparametric statistics, see e.g.\ \cite{Tsybakov2008}.

To estimate the Orlicz norm of the noise, we suggest the difference based estimator
\begin{align*}
    \widehat{\sigma}_{\psi, \textsc{np}} = \inf\left\{\sigma>0\,\Big|\, \frac{1}{n-1} \sum_{i=2}^n \psi\left( \frac{|Y_i-Y_{i-1}|}{\sigma} \right) \leq 1 \right\}.
\end{align*}

\begin{theorem}\label{thm:NP-diff}
    If $\|\epsilon\|_{\psi}<\infty$ and \eqref{eqn:regularity} holds, then
    \begin{align*}
        \widehat{\sigma}_{\psi,\textsc{np}} \to \|\epsilon_2-\epsilon_1\|_{\psi} \qquad \text{almost surely as }n\to\infty. 
    \end{align*}
\end{theorem}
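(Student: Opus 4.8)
The plan is to recast both the estimator and its target as crossing points of monotone functions and to reduce the claim to a pointwise strong law for the decreasing random criterion
\[
    \Psi_n(\sigma)=\frac{1}{n-1}\sum_{i=2}^n\psi\!\left(\frac{|Y_i-Y_{i-1}|}{\sigma}\right),\qquad \Psi(\sigma)=\E\,\psi\!\left(\frac{|\epsilon_2-\epsilon_1|}{\sigma}\right),
\]
so that $\widehat{\sigma}_{\psi,\textsc{np}}=\inf\{\sigma:\Psi_n(\sigma)\le1\}$ and the target is $\sigma_0=\|\epsilon_2-\epsilon_1\|_\psi=\inf\{\sigma:\Psi(\sigma)\le1\}$, which is finite since $\|\epsilon_2-\epsilon_1\|_\psi\le2\|\epsilon\|_\psi$. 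Writing $Y_i-Y_{i-1}=\delta_i+D_i$ with signal increments $\delta_i=\mu_i-\mu_{i-1}$ and noise increments $D_i=\epsilon_i-\epsilon_{i-1}$, I would first record that $\Psi$ is convex and non-increasing with $\Psi(\sigma)\to0$ as $\sigma\to\infty$, hence strictly decreasing on $[\sigma_0,\infty)$; this gives $\Psi(\sigma)>1$ for $\sigma<\sigma_0$ and $\Psi(\sigma)<1$ for $\sigma>\sigma_0$. Once $\Psi_n(\sigma)\to\Psi(\sigma)$ almost surely is established for each fixed $\sigma$, the conclusion $\widehat{\sigma}_{\psi,\textsc{np}}\to\sigma_0$ follows from the same monotone crossing-point argument used for Theorem~\ref{thm:LLN}.

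The core is therefore the pointwise convergence of $\Psi_n$. The noise increments $(D_i)$ are stationary and $1$-dependent rather than independent, but this is harmless: splitting into the even- and odd-indexed subsequences $(D_2,D_4,\dots)$ and $(D_3,D_5,\dots)$, each consisting of independent copies of $\epsilon_2-\epsilon_1$, the strong law of large numbers applies to each and yields $\frac{1}{n-1}\sum_{i=2}^n g(D_i)\to\E g(\epsilon_2-\epsilon_1)$ almost surely for any fixed measurable $g\ge0$ with finite mean (and divergence to $+\infty$ otherwise). To separate signal from noise I would fix a threshold $r>0$ and split the indices into the regular set $\{|\delta_i|\le r\}$ and the exceptional set $\{|\delta_i|>r\}$, whose cardinality is exactly $\mathcal{E}_n(\mu,r)$. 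On regular indices $(|D_i|-r)_+\le|\delta_i+D_i|\le|D_i|+r$, so the regular contribution is sandwiched between signal-free sums of $\psi((|D_i|-r)_+/\sigma)$ and $\psi((|D_i|+r)/\sigma)$, which converge by the previous step and both tend to $\Psi(\sigma)$ as $r\downarrow0$ by monotone and dominated convergence.

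The crux, and the step I expect to be hardest, is controlling the exceptional sum $\frac{1}{n-1}\sum_{|\delta_i|>r}\psi(|\delta_i+D_i|/\sigma)$: there are only finitely many such terms in the limit, since $\limsup_n\mathcal{E}_n(\mu,r)\le\mathcal{E}^*(r)<\infty$ by \eqref{eqn:regularity} and hence $\mathcal{E}_n(\mu,r)/n\to0$, but each term may be large because $\psi$ is unbounded. I would treat the two bounds asymmetrically. For the lower bound (needed at $\sigma<\sigma_0$, where $\Psi(\sigma)$ may be infinite) it suffices to work with the truncation $\psi(\cdot)\wedge K$; then the excised exceptional contribution is at most $K\,\mathcal{E}_n(\mu,r)/(n-1)\to0$, leaving $\liminf_n\Psi_n(\sigma)\ge\E[\psi((|\epsilon_2-\epsilon_1|-r)_+/\sigma)\wedge K]$, and letting $K\to\infty$ and then $r\downarrow0$ gives $\liminf_n\Psi_n(\sigma)\ge\Psi(\sigma)$. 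For the upper bound (needed at $\sigma>\sigma_0$) truncation is unavailable, but here there is moment room: using boundedness of the signal (as for a cadlag $f$ on $[0,1]$), $|\delta_i|\le M:=2\|\mu\|_\infty$, and choosing $\sigma''\in(\sigma_0,\sigma)$ one verifies $\E\,\psi((M+|\epsilon_2-\epsilon_1|)/\sigma)<\infty$, so a first-moment Borel--Cantelli argument yields $n^{-1}\max_{i\le n}\psi((M+|D_i|)/\sigma)\to0$ almost surely; multiplying by the bounded factor $\mathcal{E}_n(\mu,r)$ kills the exceptional sum. Combining the regular sandwich with the vanishing exceptional sums and sending $r\downarrow0$ then gives $\limsup_n\Psi_n(\sigma)\le\Psi(\sigma)$, which completes the pointwise law and hence the theorem.
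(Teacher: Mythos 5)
Your proof is correct, but it takes a genuinely different route from the paper. The paper's argument is much shorter: it observes that $\widehat{\sigma}_\psi$ is itself an Orlicz norm with respect to the empirical measure, hence satisfies the (reverse) triangle inequality, which immediately gives $\widehat{\sigma}_{\psi,\textsc{np}} = \widehat{\sigma}_\psi(\epsilon_2-\epsilon_1,\ldots,\epsilon_n-\epsilon_{n-1}) + \Delta_n$ with $|\Delta_n|\le \widehat{\sigma}_\psi(\mu_2-\mu_1,\ldots,\mu_n-\mu_{n-1})$; the noise term converges by the law of large numbers, and the signal term is handled by a purely deterministic computation showing $\frac{1}{n-1}\sum_{i=2}^n\psi\bigl(q(\mu_i-\mu_{i-1})/r\bigr)\to\psi(q)=\tfrac12$ under \eqref{eqn:regularity}, whence $\Delta_n\to 0$. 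You instead work directly with the criterion function $\Psi_n$, prove a pointwise strong law by splitting indices into regular and exceptional sets, and conclude by the monotone crossing argument; this costs you the sandwich bounds, the truncation for the lower bound, and the $n^{-1}\max_i$ control for the exceptional sum, but it buys a self-contained argument that does not require recognizing the triangle-inequality structure, and it is more explicit than the paper about the $1$-dependence of the noise increments (the paper invokes Theorem \ref{thm:LLN}, stated for iid data, without comment, whereas your odd/even split addresses this directly). One small repair: your uniform bound $|\delta_i|\le M:=2\|\mu\|_\infty$ on the exceptional indices appeals to boundedness of the signal, which is not among the hypotheses; however, \eqref{eqn:regularity} gives $\mathcal{E}^*(R)<1$ for $R$ large, hence $\mathcal{E}_n(\mu,R)=0$ and so $|\mu_i-\mu_{i-1}|\le R$ for all $i$ once $n$ is large, which is all your argument actually uses --- replace $M$ by this $R$ and the step goes through verbatim.
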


The difference-based estimator $\frac{1}{n-1} \sum_{i=1}^n (Y_i-Y_{i-1})^2$, corresponding to the special case $\psi(x)=x^2)$, is commonly employed to estimate the second moment $\Var(\epsilon)$.
In the latter case, the variance of the individual $\epsilon_i$ may be recovered because $\Var(\epsilon_2\color{blue}-\color{black}\epsilon_1) = 2\Var(\epsilon)$.
However, there is no similar relation for general Orlicz norms, and thus the difference-based estimator can not simply be transformed to a consistent estimator of $\|\epsilon\|_\psi$.
Yet, the convexity of $\psi$ allows us to conclude $\E \psi((\epsilon_2-\epsilon_1)/\sigma)\leq \E \psi(\epsilon/\sigma)$ for any $\sigma>0$ by virtue of Jensen's inequality and $\E\epsilon=0$, which implies $\|\epsilon\|_{\psi}\leq \|\epsilon_2-\epsilon_1\|_{\psi}$.
Thus, the estimate $\widehat{\sigma}_{\psi,\textsc{np}}$ can be used as a conservative upper bound for the Orlicz norm of the error terms, which is sufficient for many statistical applications.

\section{Central limit theorem}

Under stronger moment assumptions on $X$, i.e.\ requiring more than just $\|X\|_\psi <\infty$, the empirical Orlicz norm also satisfies a central limit theorem. 

\begin{theorem}[Central limit theorem]\label{thm:CLT}
Suppose that the Orlicz function $\psi$ is continuously differentiable, such that (i) $\E\psi(|X|/\sigma_\psi)^2<\infty$ and (ii) $\E |X| \psi'(|X|/\sigma)<\infty$ for some $\sigma<\sigma_\psi$. 
Then, as $n\to\infty$
\begin{align*}
    \sqrt{n}\left(\widehat\sigma_\psi(X_1,\ldots, X_n) - \sigma_\psi\right)
    \overset{d}{\longrightarrow}\mathcal{N}\left(0,\frac{\E\psi\left(\frac{|X|}{\sigma_\psi}\right)^2-1}{\left[\E \frac{|X|}{\sigma_\psi^2} \psi'\left(\frac{|X|}{\sigma_\psi}\right)\right]^2 }\right).
\end{align*}
A sufficient condition for (i) and (ii) is that $\E\psi\left(\frac{|X|}{\sigma}+1\right)^2<\infty$ for some $\sigma<\sigma_\psi$.
\end{theorem}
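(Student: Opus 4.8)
The plan is to treat $\widehat\sigma_\psi$ as the root of a monotone estimating equation and to exploit this monotonicity through a switching relation. Write $G_n(\sigma)=\frac1n\sum_{i=1}^n\psi(|X_i|/\sigma)$ and $G(\sigma)=\E\psi(|X|/\sigma)$; both are continuous and non-increasing, and by continuity $G_n(\widehat\sigma_\psi)=1=G(\sigma_\psi)$. Since $\widehat\sigma_\psi=\inf\{\sigma: G_n(\sigma)\le 1\}$ and $G_n(\sigma)\to 0$, one has $\{G_n\le 1\}=[\widehat\sigma_\psi,\infty)$, hence the identity $\{\widehat\sigma_\psi\le s\}=\{G_n(s)\le 1\}$. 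For any $t\in\R$ and $s_n=\sigma_\psi+t/\sqrt n$ this gives
\begin{align*}
P\big(\sqrt n(\widehat\sigma_\psi-\sigma_\psi)\le t\big)=P\big(\nu_n(s_n)\le \sqrt n(1-G(s_n))\big),\qquad \nu_n(\sigma):=\sqrt n\big(G_n(\sigma)-G(\sigma)\big).
\end{align*}
It therefore suffices to show that $\nu_n(s_n)$ converges weakly to a centred Gaussian law while the deterministic threshold $\sqrt n(1-G(s_n))$ converges to a constant, and then to read off the limiting probability.

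For the threshold I would first establish that $G$ is continuously differentiable on a left-neighbourhood of $\sigma_\psi$ with $G'(\sigma)=-\E[\tfrac{|X|}{\sigma^2}\psi'(|X|/\sigma)]$. Differentiation under the integral sign is justified, for $\sigma$ ranging over $[\sigma_0,\infty)$ with $\sigma_0$ strictly between the threshold in assumption (ii) and $\sigma_\psi$, by dominating $\tfrac{|X|}{\sigma^2}\psi'(|X|/\sigma)$ with the integrable envelope $H(x)=\tfrac{|x|}{\sigma_0^2}\psi'(|x|/\sigma_0)$, whose finiteness follows from (ii) and the monotonicity of $\psi'$. A first-order Taylor expansion then gives $\sqrt n(1-G(s_n))=-G'(\sigma_\psi)\,t+o(1)\to c\,t$ with $c:=\E[\tfrac{|X|}{\sigma_\psi^2}\psi'(|X|/\sigma_\psi)]>0$, which is exactly the denominator in the claimed variance. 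For the random part, the classical Lindeberg--Lévy CLT applied to the iid, mean-$1$, finite-variance summands $\psi(|X_i|/\sigma_\psi)$ yields $\nu_n(\sigma_\psi)\wconv \Normal(0,\tau^2)$ with $\tau^2=\Var\psi(|X|/\sigma_\psi)=\E\psi(|X|/\sigma_\psi)^2-1$, using assumption (i).

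The crux is to transfer this CLT from the fixed point $\sigma_\psi$ to the moving point $s_n$, that is, to prove the local equicontinuity $\nu_n(s_n)-\nu_n(\sigma_\psi)\pconv 0$. Writing $D_n(x)=\lvert\psi(|x|/s_n)-\psi(|x|/\sigma_\psi)\rvert$, this increment equals $\pm n^{-1/2}\sum_{i=1}^n(D_n(X_i)-\E D_n(X))$, a centred iid sum. The naive route of bounding its variance by $\E D_n^2$ fails, since $\psi(|X|/s_n)$ may have infinite second moment once $s_n<\sigma_\psi$; this is precisely what happens in the excluded Gaussian example. The key observation is that $D_n$ is one-signed and, by the mean value theorem applied to $\sigma\mapsto\psi(|x|/\sigma)$, satisfies the pointwise bound $0\le D_n(x)\le H(x)\,|t|/\sqrt n$ with the fixed integrable envelope $H$ above. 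I would then run a truncation at a level $M$: the part $D_n\1(H\le M)$ is bounded by $|t|M/\sqrt n$ and contributes a centred sum of variance $O(M^2/n)\to0$ for fixed $M$, while the nonnegative part $D_n\1(H>M)$ has rescaled mean $\sqrt n\,\E[D_n\1(H>M)]\le |t|\,\E[H\1(H>M)]$, small uniformly in $n$ and vanishing as $M\to\infty$ by integrability of $H$. Letting $n\to\infty$ and then $M\to\infty$ gives $\nu_n(s_n)-\nu_n(\sigma_\psi)\pconv 0$, hence $\nu_n(s_n)\wconv\Normal(0,\tau^2)$. This one-sided, localized truncation is the main obstacle and the heart of the argument, being exactly the step that makes the $L_1$ envelope from (ii) sufficient where an $L_2$ bound is unavailable. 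Combining the three pieces through Slutsky's theorem and the continuity of the normal CDF yields $P(\sqrt n(\widehat\sigma_\psi-\sigma_\psi)\le t)\to\Phi(ct/\tau)=P(\Normal(0,\tau^2/c^2)\le t)$, which is the stated limit.

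Finally, to verify that $\E\psi(|X|/\sigma+1)^2<\infty$ for some $\sigma<\sigma_\psi$ implies (i) and (ii): monotonicity of $\psi$ gives $\psi(|X|/\sigma_\psi)\le\psi(|X|/\sigma)\le\psi(|X|/\sigma+1)$, hence (i) upon squaring and taking expectations; and convexity gives the two elementary bounds $\psi'(u)\le\psi(u+1)-\psi(u)\le\psi(u+1)$ and $u\le C\,\psi(u+1)$ for large $u$ (superlinearity of $\psi$), so that $|X|\,\psi'(|X|/\sigma)\le C'\,\psi(|X|/\sigma+1)^2$ up to a bounded term, which yields (ii) after taking expectations.
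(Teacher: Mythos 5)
Your proof is correct, but it reaches the conclusion by a genuinely different route than the paper. The paper Taylor-expands the random function $G_n$ around $\sigma_\psi$ at the random root $\widehat\sigma_\psi$, writing $\widehat\sigma_\psi-\sigma_\psi = -(G_n(\sigma_\psi)-1)/G_n'(\widetilde\sigma_\psi)$ for an intermediate point $\widetilde\sigma_\psi$; condition (ii) is then used to get $G_n'\to G'$ almost surely, upgraded to locally uniform convergence via the monotonicity of $G_n'$ (a P\'olya-type argument), after which Slutsky finishes. You instead never differentiate the empirical objective: you invert the monotone estimating equation through the switching relation $\{\widehat\sigma_\psi\le s\}=\{G_n(s)\le 1\}$, differentiate only the deterministic $G$ (justified by dominated convergence with the envelope $H$ from (ii)), and handle the randomness through a stochastic-equicontinuity statement $\nu_n(s_n)-\nu_n(\sigma_\psi)\pconv 0$, proved by a one-sided truncation that correctly exploits the fact that only an $L_1$ envelope is available where an $L_2$ bound may fail --- this is the step where condition (ii) enters for you, playing the role that uniform convergence of $G_n'$ plays in the paper. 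Both arguments use conditions (i) and (ii) in essentially the same roles, and your verification of the sufficient condition at the end mirrors the paper's. The trade-off: the paper's expansion \eqref{eqn:taylor} is reused verbatim for the exponential, Weibull and Gaussian examples with non-Gaussian stable limits, so it earns its keep later, whereas your switching argument is more self-contained (no almost-sure uniform convergence of a random derivative is needed) and is the more standard M-estimation template; to recover the paper's later examples from your approach one would have to redo the threshold computation with the appropriate non-$\sqrt n$ scaling. Two small points you gloss over but that deserve a line: positivity of $c=\E[\tfrac{|X|}{\sigma_\psi^2}\psi'(|X|/\sigma_\psi)]$ (it follows since $\E\psi(|X|/\sigma_\psi)=1>0$ forces $\psi'>0$ on a set of positive probability), and the fact that $G(\sigma_\psi)=1$ exactly, which requires left-continuity of $G$ at $\sigma_\psi$ and hence finiteness of $G$ slightly to the left --- available from (ii) via $\psi(u)\le u\psi'(u)$.
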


For the exponential Orlicz norms $\|\cdot\|_{\psi_\alpha}$, the condition of Theorem \ref{thm:CLT} is satisfied if $\E \psi_\alpha(|X|/\sigma)<\infty$ for some $\sigma< 2^{-1/\alpha}\sigma_{\psi_\alpha}$. 
For example, any bounded random variable $X$ satisfies this condition for all $\alpha$. 
However, the conditions for the central limit theorem are more restrictive than those for the law of large numbers. 
Indeed, various non-standard rates of convergence can occur, even for elementary probability distributions. 
This is demonstrated via the following examples.


\begin{example}[Exponential distribution]
    For the standard exponential distribution $X\sim \text{Exp}(1)$, the sub-exponential norm is $\sigma_{\psi_1}=\|X\|_{\psi_1} =2$, but $\E \psi_1(X/\sigma_{\psi_1})^2=\infty$.
    Thus, the law of large numbers holds, but the rate of convergence is slower than $\sqrt{n}$. 
    Indeed, $P(\psi_1(X/2)>z) = (z+1)^{-2}$, and the generalized central limit theorem of Gnedenko and Kolmogorov, see e.g.\ \cite[Thm.~3.12]{nolan_univariate_2020}, yields $\frac{1}{\sqrt{n\log n}} \sum_{i=1}^n \left[\psi(X_i/2)-1\right] \overset{d}{\longrightarrow} \mathcal{N}(0, \tfrac{1}{2})$.
    Via formula \eqref{eqn:taylor} in the appendix, and using $\E [\frac{|X|}{\sigma_{\psi_1}^2} \exp(\frac{|X|}{\sigma_{\psi_1}})]=1$, we obtain the following result.
\end{example}

    \begin{proposition}\label{prop:Exp}
        Let $X_1,\ldots, X_n$ be iid standard exponentially distributed.
        Then
        \begin{align*}
        \sqrt{\tfrac{n}{\log n}}\left(\widehat{\sigma}_{\psi_1}(X_1,\ldots, X_n)-\sigma_{\psi_1}\right) \overset{d}{\longrightarrow} \mathcal{N}\left(0,\tfrac{1}{2}\right).
    \end{align*}
    \end{proposition}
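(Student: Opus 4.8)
The plan is to reduce the statement to the generalized central limit theorem for the heavy-tailed summands $\psi_1(X_i/2)$ via the same linearization that underlies Theorem~\ref{thm:CLT}. Since $\psi_1(x)=e^x-1$ is continuous and strictly increasing with $\psi_1(0)=0$ and $\psi_1(x)\to\infty$, the empirical Orlicz norm $\widehat\sigma_{\psi_1}$ is, almost surely, the unique solution of $F_n(\sigma):=\frac{1}{n}\sum_{i=1}^n\psi_1(X_i/\sigma)=1$, while $\sigma_{\psi_1}=2$ solves $F(\sigma):=\E\psi_1(X/\sigma)=1$. Writing both defining equations as $F_n(\widehat\sigma_{\psi_1})=1=F(\sigma_{\psi_1})$ and applying the expansion \eqref{eqn:taylor} yields an identity of the form
\begin{align*}
\widehat\sigma_{\psi_1}-\sigma_{\psi_1}=\frac{F(\sigma_{\psi_1})-F_n(\sigma_{\psi_1})}{F_n'(\tilde\sigma_n)}
\end{align*}
for some $\tilde\sigma_n$ between $\widehat\sigma_{\psi_1}$ and $\sigma_{\psi_1}$.

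The next step is to treat numerator and denominator separately, and here the asymmetry between conditions (i) and (ii) of Theorem~\ref{thm:CLT} is decisive. For the denominator, Theorem~\ref{thm:LLN} gives $\widehat\sigma_{\psi_1}\to\sigma_{\psi_1}$ almost surely, hence $\tilde\sigma_n\to 2$; since $\E[Xe^{X/\sigma}]=(1-1/\sigma)^{-2}<\infty$ for every $\sigma>1$, condition~(ii) holds on a neighborhood of $\sigma_{\psi_1}$, and a uniform law of large numbers for $F_n'$ there gives $F_n'(\tilde\sigma_n)\to F'(\sigma_{\psi_1})=-\E[\tfrac{X}{4}e^{X/2}]=-1$ in probability, which is exactly the highlighted expectation. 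For the numerator, $F(\sigma_{\psi_1})=1$ gives
\begin{align*}
F(\sigma_{\psi_1})-F_n(\sigma_{\psi_1})=-\frac{1}{n}\sum_{i=1}^n\bigl[\psi_1(X_i/2)-1\bigr],
\end{align*}
whose summands satisfy $P(\psi_1(X/2)>z)=(z+1)^{-2}$, so they have tail index exactly $2$ and $\E\psi_1(X/2)^2=\infty$. This is precisely the boundary case in which condition~(i) fails, so the ordinary central limit theorem is unavailable.

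To handle the numerator I would invoke the Gnedenko--Kolmogorov generalized central limit theorem for tail index $2$, which applies because the truncated second moment $\E[\psi_1(X/2)^2\1(\psi_1(X/2)\le t)]$ is slowly varying, growing like $\log t$; with single-sided tail constant $1$ this produces the non-standard norming $\sqrt{n\log n}$ and the limit $\frac{1}{\sqrt{n\log n}}\sum_{i=1}^n[\psi_1(X_i/2)-1]\overset{d}{\longrightarrow}\mathcal{N}(0,\tfrac12)$ stated before the proposition. Combining the two displays gives
\begin{align*}
\sqrt{\tfrac{n}{\log n}}\bigl(\widehat\sigma_{\psi_1}-\sigma_{\psi_1}\bigr)
=\frac{-\frac{1}{\sqrt{n\log n}}\sum_{i=1}^n[\psi_1(X_i/2)-1]}{F_n'(\tilde\sigma_n)},
\end{align*}
whose numerator converges to $\mathcal{N}(0,\tfrac12)$ and whose denominator converges to $-1$, so Slutsky's theorem and symmetry of the centered normal yield the claimed limit $\mathcal{N}(0,\tfrac12)$. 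The main obstacle is not the algebra but the breakdown of the standard theory: one must verify that the linearization \eqref{eqn:taylor} stays valid at the slower rate $\sqrt{\log n/n}$, that the denominator $F_n'(\tilde\sigma_n)$ still concentrates at $-1$ (it does, since condition~(ii) holds even though (i) fails), and that the correct slowly-varying truncated variance is identified so as to pin down both the $\sqrt{\log n}$ factor and the limiting variance $\tfrac12$.
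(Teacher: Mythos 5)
Your overall strategy is exactly the paper's: invoke the linearization \eqref{eqn:taylor}, show the denominator $G_n'(\tilde\sigma_n)$ converges to $G'(\sigma_{\psi_1})=-1$ using finiteness of $\E[Xe^{X/\sigma}]=(1-1/\sigma)^{-2}$ for $\sigma>1$, apply the Gnedenko--Kolmogorov generalized CLT to the tail-index-$2$ numerator, and finish with Slutsky. The paper compresses all of this into one paragraph plus a pointer to \eqref{eqn:taylor}, so your write-up is a faithful and more detailed reconstruction of the intended argument.

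The one step that does not hold up under scrutiny is your identification of the constant. You assert that the truncated second moment $\E\bigl[\psi_1(X/2)^2\mathds{1}(\psi_1(X/2)\le t)\bigr]$ grows like $\log t$ and that this, with one-sided tail constant $1$, yields the limit $\mathcal{N}(0,\tfrac{1}{2})$. But $W=\psi_1(X/2)$ has survival function $P(W>w)=(1+w)^{-2}$, hence density $2(1+w)^{-3}$, and
\begin{align*}
\E\left[W^2\,\mathds{1}(W\le t)\right]=\int_0^t \frac{2w^2}{(1+w)^3}\,dw = 2\log t + O(1),
\end{align*}
so the slowly varying truncated variance is $2\log t$, not $\log t$. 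Feeding $U(t)\sim 2\log t$ into the domain-of-attraction normalization $nU(a_n)/a_n^2\to v$ with $a_n=\sqrt{n\log n}$ gives $v=\lim_n (\log n+\log\log n)/\log n=1$, so the computation you set up actually produces $\frac{1}{\sqrt{n\log n}}\sum_{i=1}^n[\psi_1(X_i/2)-1]\overset{d}{\longrightarrow}\mathcal{N}(0,1)$, and hence limiting variance $1$ in the proposition (the denominator equals $1$ exactly). Your stated constant $\tfrac{1}{2}$ agrees with the paper's, but your own justification contains a factor-of-$2$ slip: one-sidedness of the tail does not halve the truncated variance, since a one-sided tail $P(Z>z)\sim z^{-2}$ already contributes the full $2\log t$. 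You should either correct the truncated-variance computation and accept the resulting variance, or identify precisely where a compensating factor $\tfrac{1}{2}$ would enter; as written, the argument does not establish the variance it claims.
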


\begin{example}[Weibull distribution]
    For the standard Weibull distribution $X\sim\mathrm{Wei}(1,\gamma)$ with shape parameter $\gamma>0$, i.e.\ $P(X>x)=\exp(-x^{\gamma})$, the conditions of Theorem \ref{thm:CLT} are satisfied for $\psi_{\alpha}$ for all $\alpha<\gamma$.
    For the boundary case $\alpha=\gamma$, a direct computation yields $\sigma_{\psi_\gamma}=\|X\|_{\psi_\gamma}=2^{1/\gamma}$.
    Moreover, for any $z>0$,
    \begin{align*}
        P\left( \psi_\gamma(X/\sigma_{\psi_\gamma}) >z\right) = P\left(X>\sigma_{\psi_\gamma} \log(z+1)^{1/\gamma}\right) = (z+1)^{-2},
    \end{align*}
    and
    \begin{align*}
        \E \left[\tfrac{|X|}{\sigma_{\psi_\gamma}^2} \psi_{\gamma}'\left(\tfrac{|X|}{\sigma_\psi}\right)\right] 
        &= \frac{\gamma}{2^{1/\gamma}}\E \left[\left(\tfrac{X}{\sigma_{\psi_\gamma}}\right)^\gamma \exp\left( \left(\tfrac{X}{\sigma_{\psi_\gamma}}\right)^\gamma\right)\right] \\
        &=\frac{\gamma^2}{2} \frac{1}{2^{1/\gamma}} \int_0^\infty z^{2\gamma-1} e^{-z^\gamma/2}\, dz 
        \quad = \frac{\gamma}{2} \frac{1}{2^{1/\gamma}} \int_0^\infty x e^{-x/2}\, dx = \frac{2\gamma}{2^{1/\gamma}}.
    \end{align*}
    Thus, we obtain analogously to the exponential case the asymptotic normality of the empirical Orlicz norm.
\end{example}

\begin{proposition}\label{thm:Weibull}
        Let $X_1,\ldots, X_n\sim \mathrm{Wei}(1,\gamma)$ be iid standard Weibull distributed.
        Then
        \begin{align*}
        \sqrt{\tfrac{n}{\log n}}\left(\widehat{\sigma}_{\psi_\gamma}(X_1,\ldots, X_n)-\sigma_{\psi_\gamma}\right) \overset{d}{\longrightarrow} \mathcal{N}\left(0,\tfrac{4^{1/\gamma}}{8\gamma^2}\right).
    \end{align*}
\end{proposition}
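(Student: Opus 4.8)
The plan is to mimic the argument for the exponential case (Proposition \ref{prop:Exp}) almost verbatim, since the relevant transformed variable has an identical distribution. First I would invoke the linearization formula \eqref{eqn:taylor} from the appendix, which expresses $\widehat{\sigma}_{\psi_\gamma}-\sigma_{\psi_\gamma}$ as the ratio of the centered average $\frac{1}{n}\sum_{i=1}^n[\psi_\gamma(X_i/\sigma_{\psi_\gamma})-1]$ over an empirical Jacobian that converges, by the law of large numbers, to $\E[\tfrac{|X|}{\sigma_{\psi_\gamma}^2}\psi_\gamma'(|X|/\sigma_{\psi_\gamma})]$, with asymptotically negligible remainder. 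This Jacobian was already computed in the preceding example to equal $2\gamma/2^{1/\gamma}$, so the denominator poses no difficulty.

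The decisive observation is that $P(\psi_\gamma(X/\sigma_{\psi_\gamma})>z)=(z+1)^{-2}$ does not depend on $\gamma$ and coincides exactly with the tail appearing in the exponential example. Hence $\psi_\gamma(X_i/\sigma_{\psi_\gamma})$ has unit mean (since $\int_0^\infty (1+z)^{-2}\,dz=1$) but infinite variance, with tail index exactly $2$, lying on the boundary of the normal domain of attraction. The generalized central limit theorem of Gnedenko and Kolmogorov therefore applies with the same nonstandard normalization $\sqrt{n\log n}$ as in Proposition \ref{prop:Exp}, giving $\frac{1}{\sqrt{n\log n}}\sum_{i=1}^n[\psi_\gamma(X_i/\sigma_{\psi_\gamma})-1]\overset{d}{\longrightarrow}\mathcal{N}(0,\tfrac{1}{2})$. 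I expect this borderline regime to be the main obstacle, as the ordinary central limit theorem of Theorem \ref{thm:CLT} is unavailable precisely because condition (i) fails; however, verifying the slowly varying truncated-variance condition is identical to the exponential case and requires no new computation.

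Finally I would combine the two limits by Slutsky's theorem. Since $\sqrt{n/\log n}\cdot\sqrt{(\log n)/n}=1$, rescaling the numerator by $\sqrt{n/\log n}$ and dividing by the Jacobian limit yields a centered normal with variance $\frac{1/2}{(2\gamma/2^{1/\gamma})^2} = 2^{2/\gamma}/(8\gamma^2)=4^{1/\gamma}/(8\gamma^2)$, which is the claimed asymptotic variance.
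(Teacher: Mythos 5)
Your proposal is correct and follows exactly the route the paper takes: linearize via \eqref{eqn:taylor}, observe that $\psi_\gamma(X/\sigma_{\psi_\gamma})$ has the same $(z+1)^{-2}$ tail as in the exponential example so the same $\sqrt{n\log n}$-normalized Gaussian limit with variance $\tfrac12$ applies, divide by the Jacobian $2\gamma/2^{1/\gamma}$ computed in the example, and conclude by Slutsky. The paper itself disposes of this proposition with the phrase ``analogously to the exponential case,'' which is precisely your argument.
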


\begin{example}[Normal distribution]
    If $X\sim\mathcal{N}(0,1)$, then the exponential-type Orlicz function $\psi_\alpha$ satisfies Theorem \ref{thm:CLT} for all $\alpha\in[1,2)$ because $\E\exp(a|X|^\alpha)<\infty$ for all $a>0$.
    However, for the sub-Gaussian case, $\alpha=2$, a direct computation yields $\sigma_{\psi_2}=\|X\|_{\psi_2}=\sqrt{8/3}$, but Theorem \ref{thm:CLT} does not apply because $\E \exp\left(X^2/(8/3)\right)^2=\infty$.
    Instead, we derive the tail bound
    \begin{align*}
        P\left(\psi_2\left(\tfrac{|X|}{\sigma_{\psi_2}}\right)-1>z\right) 
        &= 2\,P\left(X>\sqrt{\tfrac{8}{3}\log(z+2)} \right) \\
        &= \exp\left( - \frac{\tfrac{8}{3}\log(z+2)}{2} \right) L(z) 
        = z^{-\frac{4}{3}} \widetilde{L}(z),
    \end{align*}
    for slowly varying functions $L, \widetilde{L}$, i.e.\ $\tilde{L}(az)/\tilde{L}(z)\to 1$ as $z\to \infty$ for any $a>0$, see Lemma \ref{lem:gauss-quantiles}. 
    Thus, the sum $\sum_{i=1}^n \psi_2(X_i/\sigma_{\psi_2})$ admits a $\beta$-stable limit distribution with index $\beta=\frac{4}{3}$, which leads to a heavy-tailed limit of the empirical sub-Gaussian norm.
\end{example}

    \begin{proposition}\label{prop:Gauss}
        Let $X_1,\ldots, X_n$ be iid standard Gaussian. 
        Then
        \begin{align*}
            n^{\frac{1}{4}} \log(n)^{\frac{3}{8}} \left[\widehat{\sigma}_{\psi_2}(X_1,\ldots, X_n)-\sigma_{\psi_2}\right] \overset{d}{\longrightarrow} \sqrt{\tfrac{2}{27 \pi^{3/4}}}(Y-4),        \end{align*}
        for a fully right-skewed $\beta$-stable random variable $Y$ with stability index $\beta=\frac{4}{3}$, and characteristic function given by \eqref{eqn:Y-charfun} in the appendix.
    \end{proposition}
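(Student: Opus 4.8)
The plan is to combine the linearization of the defining equation for $\widehat\sigma_{\psi_2}$ with the generalized (stable) central limit theorem for the heavy-tailed summands $W_i = \psi_2(|X_i|/\sigma_{\psi_2}) - 1$. Writing $G_n(\sigma) = \frac1n\sum_{i=1}^n \psi_2(|X_i|/\sigma)$ and $g(\sigma) = \E\psi_2(|X|/\sigma)$, the estimator solves $G_n(\widehat\sigma_{\psi_2}) = 1 = g(\sigma_{\psi_2})$, so that the same first-order expansion underlying formula \eqref{eqn:taylor} gives $\widehat\sigma_{\psi_2} - \sigma_{\psi_2} \approx -g'(\sigma_{\psi_2})^{-1}(G_n(\sigma_{\psi_2}) - 1) = -g'(\sigma_{\psi_2})^{-1} n^{-1}\sum_{i=1}^n W_i$. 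The whole problem thus reduces to the asymptotics of the centered sum $S_n = \sum_{i=1}^n W_i$, together with the evaluation of the deterministic slope $g'(\sigma_{\psi_2}) = -\E[\frac{|X|}{\sigma_{\psi_2}^2}\psi_2'(|X|/\sigma_{\psi_2})]$.

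The first key step is to pin down the tail of $W$ and apply the Gnedenko--Kolmogorov stable limit theorem. Since $\E W = \E\psi_2(|X|/\sigma_{\psi_2}) - 1 = 0$, the summands are centered, and the left tail is trivial because $W \ge -1$; the right tail is the one computed in the Normal example, $P(W > z) = z^{-4/3}\widetilde L(z)$ with $\widetilde L(z) \sim \frac{\sqrt3}{2\sqrt\pi}(\log z)^{-1/2}$, where the $(\log z)^{-1/2}$ factor is exactly the Mills-ratio prefactor $1/t$ evaluated at $t = \sqrt{(8/3)\log(z+2)}$. With tail index $\beta = 4/3 \in (1,2)$ and a totally one-sided tail, $W$ lies in the domain of attraction of a fully right-skewed $\beta$-stable law. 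Choosing the norming $a_n$ by $nP(W > a_n)\to 1$, i.e.\ solving $n\widetilde L(a_n)a_n^{-4/3}\to 1$, yields $a_n \propto n^{3/4}(\log n)^{-3/8}$, and hence $S_n/a_n$ converges to the standardized stable variable whose characteristic function is recorded in \eqref{eqn:Y-charfun}. Because we center $S_n$ at the true mean rather than at the truncated mean used to normalize that characteristic function, a deterministic shift appears: the difference of the two centerings, divided by $a_n$, converges to $\frac{\beta}{\beta-1}\cdot 1 = 4$, which is the origin of the term $Y - 4$ in the limit.

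It then remains to assemble the constants. Combining the linearization with $S_n/a_n \to Y - 4$ gives $\frac{n}{a_n}(\widehat\sigma_{\psi_2} - \sigma_{\psi_2}) \to -g'(\sigma_{\psi_2})^{-1}(Y - 4)$, so the rate is $n/a_n$, which is $n^{1/4}$ up to the logarithmic factor dictated by $\widetilde L$, matching the claimed normalization. The slope is a Gaussian integral: with $\psi_2'(u) = 2ue^{u^2}$ and $\sigma_{\psi_2}^2 = 8/3$ one finds $\E[X^2 e^{X^2/\sigma_{\psi_2}^2}] = 8$ and hence $g'(\sigma_{\psi_2}) = -\frac{3\sqrt6}{2}$, consistent with the denominator in Theorem~\ref{thm:CLT}. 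Finally, the scale of the stable limit contributes the constant $C = \frac{\sqrt3}{2\sqrt\pi}$ of $\widetilde L$ through $C^{3/4} \propto \pi^{-3/8}$, and multiplying by $-g'(\sigma_{\psi_2})^{-1} = \frac{\sqrt6}{9}$ reproduces exactly the stated prefactor $\sqrt{2/(27\pi^{3/4})}$.

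The main obstacle is making the linearization rigorous despite the heavy tails: the random slope $G_n'(\sigma_{\psi_2}) = -\frac1n\sum_i \frac{|X_i|}{\sigma_{\psi_2}^2}\psi_2'(|X_i|/\sigma_{\psi_2})$ has finite mean but infinite variance (its squared integrand is $\propto e^{X^2/4}$, which is not integrable), so a naive Taylor bound on the remainder is unavailable. I would instead exploit the monotonicity of $\sigma \mapsto G_n(\sigma)$, which is strictly decreasing, to sandwich $\widehat\sigma_{\psi_2}$ between the solutions of the perturbed equations $G_n(\sigma) = 1 \pm \delta_n$ for a suitable sequence, and show that on the relevant scale $\widehat\sigma_{\psi_2} - \sigma_{\psi_2} = O_P(a_n/n)$ the fluctuations of $G_n'$ about $g'(\sigma_{\psi_2})$ are of strictly smaller order. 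Concretely, because the stable rate $a_n/n$ is much larger than $n^{-1/2}$, it suffices to control $G_n$ on a shrinking neighborhood of $\sigma_{\psi_2}$ of width $O(a_n/n)$, where a one-sided bracketing argument together with the consistency from Theorem~\ref{thm:LLN} shows the remainder is $o_P(a_n/n)$, so that the stable fluctuation of $S_n$ dominates and the displayed limit follows.
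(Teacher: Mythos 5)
Your strategy coincides with the paper's: linearize via \eqref{eqn:taylor}, apply the generalized CLT to $W_i=\psi_2(|X_i|/\sigma_{\psi_2})-1$, extract the norming sequence $a_n$ from Gaussian quantile/Mills-ratio asymptotics, and identify the shift $4=\beta/(\beta-1)$ coming from the truncated-mean centering. Your constants check out against the paper: $-G'(\sigma_{\psi_2})=\tfrac{3\sqrt{6}}{2}=\sqrt{27/2}$, $-G'(\sigma_{\psi_2})^{-1}=\tfrac{\sqrt{6}}{9}=\sqrt{2/27}$, and $\widetilde L(z)\sim\tfrac{\sqrt{3}}{2\sqrt{\pi}}(\log z)^{-1/2}$ are all correct. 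Your concern about making the linearization rigorous is resolved more simply than the bracketing scheme you sketch: the paper only needs $\E|X|\psi_2'(|X|/\sigma)<\infty$ for some $\sigma<\sigma_{\psi_2}$ (true here for any $\sigma>\sqrt{2}$), which by the strong law gives $G_n'(\sigma)\to G'(\sigma)$ almost surely pointwise, hence locally uniformly by monotonicity of $G_n'$; no second moment of $G_n'$ is needed and the mean-value expansion \eqref{eqn:taylor} goes through directly.

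The genuine problem is the logarithmic exponent, which you assert ``matches the claimed normalization'' without verifying it. From your own (correct) slowly varying factor, solving $n\widetilde L(a_n)a_n^{-4/3}\to 1$ gives $a_n\sim \pi^{-3/8}n^{3/4}(\log n)^{-3/8}$, exactly as you write; but then $n/a_n\sim \pi^{3/8}n^{1/4}(\log n)^{+3/8}$, which differs from the displayed normalization $n^{1/4}(\log n)^{-3/8}$ by a factor $(\log n)^{3/4}$. Your two displayed claims are mutually inconsistent, and the step ``so the rate is $n/a_n$, \dots matching the claimed normalization'' fails as written. Tracing the paper's own computation, the substitution of Lemma \ref{lem:gauss-quantiles} into $a_n$ there carries a $+\log\log(2n)$ where the lemma supplies $-\log\log\tfrac{1}{q}$, so the paper's $a_n=(\log n/\pi)^{3/8}n^{3/4}(1+o(1))$ should read $a_n=(\pi\log n)^{-3/8}n^{3/4}(1+o(1))$; your $a_n$ is the correct one, and the power of $\log n$ in the normalization should accordingly be $+3/8$ (the multiplicative constant $\sqrt{2/(27\pi^{3/4})}$ is unaffected). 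You must resolve this discrepancy explicitly: as submitted, a consistent completion of your argument proves a statement with a different logarithmic normalization from the one claimed.
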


Beyond these examples, without any further regularity assumptions, it is not possible to derive a rate of  convergence for the empirical Orlicz norm.

\begin{theorem}[No rate of convergence]\label{prop:LB}
    Consider the exponential-type Orlicz function $\psi(x)=\psi_\alpha(x)=\exp(|x|^\alpha)-1$ for some $\alpha\in[1,2]$.
    For any $\beta>0$, there exists a random variable $X$ with $\|X\|_\psi=1$ such that for any sequence $X_1,\ldots, X_n\iid X$, 
    \begin{align*}
        \limsup_{n\to\infty} \frac{|\widehat{\sigma}_\psi(X_1,\ldots, X_n) -\sigma_\psi| }{n^{-\beta}} = \infty \qquad \text{almost surely}.
    \end{align*}
\end{theorem}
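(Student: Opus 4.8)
The plan is to construct a distribution for which $\psi(|X|)$ has a regularly varying (heavy) tail of index $\gamma$ just above $1$, so that the empirical averages $\frac1n\sum_i\psi(|X_i|)$ fluctuate around their mean at the slow, non-summable rate $n^{1/\gamma-1}$, and then to transfer these fluctuations to $\widehat\sigma_\psi$. Concretely, fix $\gamma\in(1,2)$ close enough to $1$ that $1/\gamma>1-\beta$; this is possible since $1/\gamma\to1$ as $\gamma\downarrow1$ while $1-\beta<1$. Let $W$ be a nonnegative random variable with $\E W=1$ and exactly Pareto tail $P(W>t)=(1+t/(\gamma-1))^{-\gamma}$, and set $X=\psi^{-1}(W)\ge0$, using that $\psi=\psi_\alpha$ is a continuous increasing bijection of $[0,\infty)$. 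Then $\psi(|X|)=W$, and $\E\psi(|X|/\sigma)$ equals $1$ at $\sigma=1$, is $<1$ for $\sigma>1$, and $>1$ for $\sigma<1$; hence $\sigma_\psi=\|X\|_\psi=1$. Writing $W_i=\psi(|X_i|)$ and $S_n=\sum_{i=1}^nW_i$, the whole argument reduces to showing that the centred sums $S_n-n$ grow, along a random subsequence, faster than $n^{1-\beta}$.

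Second, I would turn the deviation of $\widehat\sigma_\psi$ into a deviation of $S_n-n$ by a one-sided mean value argument. Let $g(\sigma)=\frac1n\sum_i\psi(|X_i|/\sigma)$, a continuous strictly decreasing function with $g(\widehat\sigma_\psi)=1$ and $g(1)=\frac1nS_n$. On the event $g(1)>1$ we have $\widehat\sigma_\psi>1$, and the mean value theorem gives $\widehat\sigma_\psi-1=(g(1)-1)/(-g'(\xi))$ for some $\xi\in(1,\widehat\sigma_\psi)$. Since $\psi$ is convex, for fixed $x\ge0$ the map $\xi\mapsto\frac{x}{\xi^2}\psi'(x/\xi)$ is decreasing on $[1,\infty)$, so $-g'(\xi)\le\frac1n\sum_i|X_i|\psi'(|X_i|)=:\bar U_n$ for every $\xi\ge1$. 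Because $|X|\psi'(|X|)=\alpha(W+1)\log(W+1)$ and $W$ has tail index $\gamma>1$, the mean $C:=\E[|X|\psi'(|X|)]$ is finite, so $\bar U_n\to C$ almost surely by the strong law. Consequently, whenever $S_n>n$,
\begin{align*}
\widehat\sigma_\psi-1\ \ge\ \frac{g(1)-1}{\bar U_n}\ =\ \frac{(S_n-n)/n}{\bar U_n},
\end{align*}
so that $n^\beta(\widehat\sigma_\psi-1)\ge\bar U_n^{-1}\,n^{\beta-1+1/\gamma}\,\frac{S_n-n}{n^{1/\gamma}}$. Since $\beta-1+1/\gamma>0$ by the choice of $\gamma$ and $\bar U_n\to C<\infty$, it remains to prove that $\limsup_n(S_n-n)/n^{1/\gamma}=\infty$ almost surely.

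Third, this limsup is the crux, and the difficulty is precisely that it is a pathwise statement whereas the natural input is only distributional. Here I would argue as follows. The quantity $\limsup_n(S_n-n)/n^{1/\gamma}$ is unchanged by altering finitely many $W_i$, since the change in $S_n$ is bounded and $n^{1/\gamma}\to\infty$; hence it is measurable with respect to the tail $\sigma$-field and is almost surely equal to a constant $c^*\in[-\infty,\infty]$ by Kolmogorov's zero--one law. On the other hand, since $W$ is regularly varying of index $\gamma\in(1,2)$ with $\E W=1$, the classical stable limit theorem yields $(S_n-n)/n^{1/\gamma}\Rightarrow Z$, a $\gamma$-stable law that is totally skewed to the right and therefore satisfies $P(Z>L)>0$ for every finite $L$. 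The inequality $P(\limsup_n A_n)\ge\limsup_n P(A_n)$ applied to $A_n=\{(S_n-n)/n^{1/\gamma}>L\}$ gives $P\big((S_n-n)/n^{1/\gamma}>L\text{ i.o.}\big)\ge P(Z>L)>0$, which is incompatible with $c^*<L$; hence $c^*\ge L$ for every $L$, i.e.\ $c^*=\infty$ almost surely.

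Combining the three steps, along the random subsequence on which $(S_n-n)/n^{1/\gamma}\to\infty$ we have $S_n>n$ eventually and $n^\beta(\widehat\sigma_\psi-1)\to\infty$, so $\limsup_n n^\beta|\widehat\sigma_\psi-\sigma_\psi|=\infty$ almost surely, as claimed. The main obstacle is the passage from the stable central limit theorem to the almost-sure limsup; the zero--one law together with the reverse Fatou inequality for events is what makes this passage clean and, crucially, avoids any delicate analysis of the lower deviations of the heavy-tailed bulk, which fluctuate at the very same order $n^{1/\gamma-1}$ and would otherwise threaten to cancel the large observations driving the divergence.
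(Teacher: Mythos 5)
Your proof is correct, and it rests on the same core construction as the paper's: make $\psi(|X|)$ Pareto-like with tail index $\gamma\in(1,2)$ chosen so that $1/\gamma-1>-\beta$, normalise so that $\E\psi(|X|)=1$ (hence $\sigma_\psi=1$), and let the $\gamma$-stable fluctuations of $\frac{1}{n}\sum_{i}\psi(|X_i|)$ drive the divergence. Where you genuinely deviate is in the two technical steps, and in both cases your route is the more self-contained one. First, instead of the paper's Taylor expansion \eqref{eqn:taylor}, which requires locally uniform almost-sure convergence of $G_n'$ near $\sigma_\psi$ (justified there via the moment bound $x\psi_\alpha'(x)\leq \psi_\alpha(x)^{\gamma^*}$), you use a one-sided mean-value bound together with the monotonicity of $\xi\mapsto \frac{x}{\xi^2}\psi'(x/\xi)$ on $[1,\infty)$, so that a single strong law for $\frac{1}{n}\sum_i|X_i|\psi'(|X_i|)$ suffices; this only gives a lower bound on $\widehat{\sigma}_\psi-1$ on the event $S_n>n$, but that is all the theorem needs. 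Second, and more importantly, you explicitly carry out the upgrade from the stable limit theorem (a purely distributional statement) to the almost-sure $\limsup=\infty$, via Kolmogorov's zero--one law for the tail random variable $\limsup_n (S_n-n)/n^{1/\gamma}$ combined with the Portmanteau lower bound and $P(\limsup_n A_n)\geq \limsup_n P(A_n)$. The paper's proof stops at the assertion that the estimation error ``is of exact order $n^{1/\gamma-1}$'' and leaves this passage implicit, so on the point the theorem literally asserts --- an almost-sure limsup --- your argument fills a gap rather than merely restating the paper's. The trade-off is that the paper's two-sided expansion identifies the exact limiting distribution of the error, whereas your one-sided bound only establishes divergence, which is exactly what is claimed.
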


This highlights that the empirical Orlicz norm does not admit any parametric rate of convergence uniformly on the class $\mathcal{X}_\psi=\{X:\|X\|_\psi\leq 1\}$ of all distributions with bounded Orlicz norm. 
Note that Theorem \ref{prop:LB} does not rule out a potential uniform logarithmic rate of convergence. 
However, we conjecture that there does not exist any uniform rate of convergence for the empirical Orlicz norm, as the following statistical lower bound holds for any estimator.

\begin{theorem}\label{thm:uniform-lb}
    For any Orlicz function $\psi$ and any estimator $\widetilde{\sigma}_\psi=\widetilde{\sigma}_\psi(X_1,\ldots,X_n)$ with $X_1,\ldots, X_n \iid X$, and any rate $r_n\to 0$, 
    \begin{align*}
        \limsup_{n\to\infty}\sup_{X\in\mathcal{X}_\psi} P\left(  \widetilde{\sigma}_\psi(X_1,\ldots, X_n) - \|X\|_\psi > r_n\right) = 1.
    \end{align*}
\end{theorem}

The claim of Theorem \ref{thm:uniform-lb} is stronger in that it also excludes a slower-than-polynomial rate for any estimator, but requires a stricter form of uniformity than Theorem \ref{prop:LB}.
Specifically, in Theorem \ref{prop:LB}, the distribution of $X$ is fixed for all $n$, while Theorem \ref{thm:uniform-lb} allows for $X$ to vary with $n$.
We also note that in any parametric family of distributions, the Orlicz norm can be estimated via plug-in of a parameter estimator, which typically converges at rate $\sqrt{n}$. The empirical Orlicz norm is, on the other hand, a model-free estimator.

\begin{example}
    Data-driven estimates of the probability $\overline{F}(t)=P(X>t)$, for very large $t$ are typically derived via extreme value theory, and applied, for example, in hydrology \citep{katz_statistics_2002}.
    This mathematical approach is able to yield accurate probability estimates for any $t\leq t_n$ in the regime $\overline{F}(t_n)\gg \exp(-\sqrt{n})$, see \citet[Thm.~4.4.1]{de_haan_extreme_2006}.
    As an alternative to the extreme value methodology, the empirical Orlicz norm can be plugged into the conservative tail inequality $\overline{F}(t)\leq\overline{F}^*(t)= 1/\psi(t/\|X\|_\psi)$ to obtain an empirical upper bound $\overline{F}^\star(t)=1/\psi(t/\widehat{\sigma}_\psi)$, which upholds for very large values of $t$. 
    Specifically, rainfall measurements are often described as having sub-Weibull tails \citep{papalexiou_battle_2013,marra_non-asymptotic_2023}, which naturally leads to the use of sub-Weibull norms $\sigma_{\psi_\alpha}=\|X\|_{\psi_\alpha}$.
    The tail bound takes the form
    \begin{align*}
        P(X>t) &\leq\overline{F}^*(t)= \frac{1}{\exp(t^\alpha/\sigma_{\psi_\alpha}^\alpha)-1} 
        \leq \frac{1}{\exp(t^\alpha/\widehat{\sigma}_{\psi_\alpha}^\alpha)-1} \Delta_n(t) = \overline{F}^\star(t)\Delta_n(t), 
    \end{align*}
    where, for some $\overline{\sigma}$ between $\sigma_{\psi_\alpha}$ and $\widehat{\sigma}_{\psi_\alpha}$,
    \begin{align*}
        \left|\Delta_n(t)-1\right| 
        &\leq \left|\frac{\exp(t^\alpha/\widehat{\sigma}_{\psi_\alpha}^\alpha) - \exp(t^\alpha/{\sigma}_{\psi_\alpha}^\alpha)}{\exp(t^\alpha/\sigma_{\psi_\alpha}^\alpha)-1} \right| 
        \quad \leq \frac{\alpha t^\alpha \left|\widehat{\sigma}_{\psi_\alpha}-\sigma_{\psi_\alpha}\right|}{\overline{\sigma}^{\alpha+1}} \frac{\exp(t^\alpha/\overline{\sigma}^\alpha)}{\exp(t^\alpha/\sigma_{\psi_\alpha}^\alpha) -1 }   \\
        &\leq \frac{\alpha t^\alpha |\widehat{\sigma}_{\psi_\alpha}-\sigma_{\psi_\alpha}|}{\overline{\sigma}^{\alpha+1}} \frac{\exp(t^\alpha/\sigma_{\psi_\alpha}^\alpha)}{\exp(t^\alpha/\sigma_{\psi_\alpha}^\alpha) -1 }  \exp\left( \tfrac{\alpha t^\alpha}{\overline{\sigma}^{\alpha+1}} |\widehat{\sigma}_{\psi_\alpha}-\sigma_{\psi_\alpha}|\right)
    \end{align*}
    We thus find that
    $\Delta_n(t)\to 1$ uniformly for $t\in[0,t_n]$ if $t_n \ll |\widehat{\sigma}_{\psi_\alpha} -  {\sigma}_{\psi_\alpha}|^{1/\alpha}$. 
    Hence, the rate of convergence of $\widehat{\sigma}_\psi$ governs how far we can reliably extrapolate into the tail. 
    Under the conditions of Theorem \ref{thm:CLT}, the rate implies the bound $t_n \ll n^{2/\alpha}$, corresponding to a tail probability bound of approximately $\overline{F}^\star(t_n)=\exp(-o(n^2))$. In comparison to the extreme value approach, we obtain an upper bound instead of an exact estimate, but the former is reliable way farther into the tail.
\end{example}

\section*{Proofs}
\subsection*{Proof: Law of large numbers (Theorem \ref{thm:LLN})}

Suppose that $P(X\neq 0)>0$ without loss of generality. 
Then $G:[0,\infty)\to[0,\infty],\sigma\mapsto \E \psi(|X|/\sigma)$ is strictly decreasing, and $G(\sigma)\to \infty$ as $\sigma\to\infty$.
Thus for any $\delta>0$, we find $G(\sigma_\psi+\delta)<1$ and $G(\sigma_\psi-\delta)>1$. 
Denoting $G_n(\sigma) = \frac{1}{n}\sum_{i=1}^n \psi(|X_i|/\sigma)$, the law of large numbers yields that almost surely
\begin{align*}
    G_n(\sigma_\psi-\delta)\to G(\sigma_\psi-\delta)>1, \qquad G_n(\sigma_\psi+\delta)\to G(\sigma_\psi+\delta)<1.
\end{align*}
and thus, almost surely, \begin{align*}
    \sigma_\psi-\delta \leq \liminf_{n\to\infty} \widehat{\sigma}_\psi \leq \limsup_{n\to\infty} \widehat{\sigma}_\psi \leq \sigma_\psi+\delta.
\end{align*}
As $\delta>0$ is arbitrary, this establishes the convergence of $\widehat{\sigma}_\psi$.

\subsection*{Proof: Regression models (Theorem \ref{thm:LM} and Theorem \ref{thm:NP-diff})}

\begin{proof}[Proof of Theorem \ref{thm:LM}]
    The triangle inequality for Orlicz norms gives for any $\overline{\beta}\in\R^d$
    \begin{align*}
        \widehat{\sigma}_{\psi}(Y_i-\bar{\beta}^T Z_i)  
        &= \widehat{\sigma}_{\psi}\left(\epsilon_i+(\beta-\bar{\beta})^T Z_i\right) \\
        &= \widehat{\sigma}_{\psi}(\epsilon_i) + \Delta, \\
        \text{for some }\qquad |\Delta|
        &\leq \sum_{j=1}^d \widehat{\sigma}_{\psi}\left((\beta - \bar\beta)_j Z_{i,j}\right) \\
        &\leq \sum_{j=1}^d |\beta_j-\bar\beta_j| \widehat{\sigma}_\psi(Z_{1,j},\ldots, Z_{n,j}).
    \end{align*}
    The law of large numbers for empirical Orlicz norms yields $\widehat{\sigma}_\psi(Z_{1,j},\ldots, Z_{n,j})\to \|Z_{1,j}\|_\psi$ almost surely for each $j$, and $\widehat{\sigma}_{\psi}(\epsilon_1,\ldots, \epsilon_n)\to \|\epsilon\|_{\psi}=\sigma_\psi$.
    Thus, if $\widehat{\beta}\to \beta$, then $\widehat{\sigma}_{\psi,\textsc{lm}}$ is consistent.
\end{proof}

\begin{proof}[Proof of Theorem \ref{thm:NP-diff}]
    By the triangle inequality for Orlicz norms, we find
    \begin{align*}
        \widehat{\sigma}_{\psi,\textsc{np}} 
        &= \widehat{\sigma}_\psi(\epsilon_2-\epsilon_1,\ldots, \epsilon_n-\epsilon_{n-1}) + \Delta_n, \\
        |\Delta_n| &\leq \widehat{\sigma}_{\psi}(\mu_2-\mu_1,\ldots, \mu_n-\mu_{n-1}).
    \end{align*}
    By the law of large numbers, $\widehat{\sigma}_\psi(\epsilon_2-\epsilon_1,\ldots, \epsilon_n-\epsilon_{n-1})\to \|\epsilon_2-\epsilon_1\|_\psi$ almost surely.

    To bound $\Delta_n$, let $R>0$ such that $\mathcal{E}^*(R)=0$, and choose an arbitrary $r>0$.
    Moreover, choose $N$ large enough such that $\mathcal{E}_n(\mu,R)=0$ and $\mathcal{E}_n(\mu,r)\leq 2\mathcal{E}^*(r)$ for all $n\geq N$. 
    Then, for $n\geq N$, and $q=\psi^{-1}(\frac{1}{2})$,
    \begin{align*}
        \frac{1}{n-1} \sum_{i=2}^n \psi\left( \frac{|\mu_i-\mu_{i-1}|}{r/q} \right)
        &\leq \frac{2\mathcal{E}^*(r)}{n-1} \psi\left(\frac{Rq}{r}\right) + \psi\left(\frac{r}{r/q}\right) 
        \overset{n\to\infty}{\longrightarrow} \psi(q)=\frac{1}{2}.
    \end{align*} 
    Hence, $\lim\sup \widehat{\sigma}_\psi(\mu_2-\mu_1,\ldots, \mu_n-\mu_{n-1}) \leq r/q$. 
    Since $r>0$ is arbitrary, we conclude that $\Delta_n\to 0$ as $n\to\infty$.
\end{proof}

\subsection*{Proof: Central limit theorem (Theorem \ref{thm:CLT})}
 
Convexity and monotonicity of $\psi$ imply that there exists a $x_0$ such that $\psi(x)\geq cx$ for all $x>x_0$. 
Thus, for $|x|>x_0$, and for some $C$, 
\begin{align*}
    0\leq -\frac{d}{d\sigma} \psi\left(\frac{|x|}{\sigma}\right) = \frac{|x|}{\sigma^2} \psi'\left(\frac{|x|}{\sigma}\right) 
    &\leq \frac{C}{\sigma} \psi\left(\frac{|x|}{\sigma}\right) \psi'\left(\frac{|x|}{\sigma}\right) \\
    &\leq \frac{C}{\sigma} \psi\left(\frac{|x|}{\sigma}\right) \int_{|x|/\sigma}^{|x|/\sigma +1}\psi'\left(z\right)\, dz
    \quad \leq \frac{C}{\sigma} \psi\left( \frac{|x|}{\sigma}+1 \right)^2.
\end{align*}
Moreover, $\psi(|x|/\sigma_\psi)^2\leq \psi(|x|/\sigma +1)^2$, which establishes the sufficient condition claimed in the Theorem.
We also note that 
\begin{align}
    G'(\sigma)\leq 0, \quad \text{and $G'$ is increasing.}\label{eqn:G}
\end{align}

To proof the central limit theorem, let $G_n$ and $G$ as above. 
We use the condition $\E |X| \psi'(|X|/\sigma^*) <\infty$ for some $\sigma^*<\sigma_\psi$ to conclude $G_n'(\sigma)\to G'(\sigma)$ almost surely, for all $\sigma \geq \sigma^*$.
Since convexity of $\psi$ implies that $G_n'$ is monotonically increasing, the latter convergence is uniform on $[\sigma^*,\infty)$.
Moreover, a Taylor expansion yields
\begin{align*}
    1  = G_n(\widehat{\sigma}_\psi) 
    = G_n(\sigma_\psi) + G_n(\widehat{\sigma}_\psi) - G_n(\sigma_\psi) 
    = G_n(\sigma_\psi) + [\widehat{\sigma}_\psi-\sigma_\psi] G_n'(\widetilde{\sigma}_\psi),
\end{align*}
for some $\widetilde{\sigma}_\psi$ between $\sigma_\psi$ and $\widehat{\sigma}_\psi$.
The strong consistency of $\widehat{\sigma}_\psi$ implies $\widetilde{\sigma}_\psi\to \sigma_\psi$ almost surely.
Moreover, $G'(\sigma_\psi)<0$, because otherwise \eqref{eqn:G} yields $G'(z)=0$ for all $z\geq \sigma_\psi$ which would lead to the contradiction $1=G(\sigma_\psi)=\lim_{z\to \infty}G(z)=0$.
Combining the fact that $G'(\sigma_{\psi})\neq 0$ with the locally uniform consistency of $G_n'$, 
\begin{align}
    \widehat{\sigma}_\psi - \sigma_\psi = - \frac{G_n(\sigma_\psi)-1}{G'(\widetilde{\sigma}_\psi) + o_P(1)} 
    = - \frac{G_n(\sigma_\psi)-\E G_n(\sigma_\psi)}{G'(\widetilde{\sigma}_\psi) + o_P(1)}. \label{eqn:taylor}
\end{align}
Note that continuity of $\psi$ and $G(\sigma^*)<\infty$ imply the continuity of $G$, and thus $1=G(\sigma_\psi) = \E(G_n(\sigma_\psi))$. 
Due to the condition $\E \psi(|X|/\sigma_\psi)^2<\infty$, the central limit theorem implies 
\begin{align*}
    \sqrt{n}[G_n(\sigma_\psi)-\E G_n(\sigma_\psi)] \overset{d}{\longrightarrow} \mathcal{N}\left(0, \Var \psi\left(\tfrac{|X|}{\sigma_\psi}\right)\right).
\end{align*} 
Via Slutsky's Lemma, we obtain the weak convergence of $\widehat{\sigma}_\psi$ since $G'(\sigma_\psi) = -\E \left[\frac{|X|}{\sigma_\psi^2}\psi'\left(\frac{|X|}{\sigma_\psi}\right)\right]$.

\subsection*{Proof: Gaussian example (Proposition \ref{prop:Gauss})}

As $\psi_2'(|x|)=2|x| \exp(|x|^2)$, we readily check that 
\begin{align}
    -G'(\sigma_{\psi_2}) 
    =\E \left[ \tfrac{|X|}{\sigma_{\psi_2}^2} \psi_2'\left(\tfrac{|X|}{\sigma_{\psi_2}}\right) \right] 
    &= \E\left[ \frac{2|X|^2}{\sigma_{\psi_2}^3} \exp\left(\tfrac{|X|^2}{\sigma_{\psi_2}^2}\right) \right]\\
    &= \sqrt{\frac{3}{8}}\frac{6}{8} \int_{-\infty}^\infty x^2 e^{\frac{3}{8} x^2} \frac{e^{-\frac{x^2}{2}}}{\sqrt{2\pi}}\,  dx \nonumber \\
    & = \sqrt{\frac{3}{8}}\frac{3}{2}\int_{-\infty}^\infty x^2 \frac{e^{-\frac{x^2}{8}}}{2\sqrt{2\pi }}\, dx \quad = \sqrt{\tfrac{27}{2}}. \label{eqn:Gauss-deriv}
\end{align}
Similarly, for any $\sigma>\sqrt{2}$ we find $\E|X|\psi_2'(|X|/\sigma)<\infty$.
As this expectation is finite, the representation \eqref{eqn:taylor} holds and we are led to study the asymptotics of 
\begin{align*}
    G_n(\sigma_{\psi_2}) - 1 = \frac{1}{n} \sum_{i=1}^n \left[\exp\left(\tfrac{X_i^2}{8/3}\right)-2\right].
\end{align*}
Denote $Z_i=\exp\left(\tfrac{X_i^2}{8/3}\right)-2$.
Since $P(Z_i>z)$ is regularly varying as $z\to \infty$, and zero as $z\to -\infty$, the generalized central limit theorem \cite[Thm.~3.7.2]{durrett_probability_2010} yields
    \begin{align*}
         \left( \sum_{i=1}^n Z_i  \;-b_n \right) \Big/ a_n \overset{d}{\longrightarrow} Y,       
    \end{align*}
    for a non-degenerate distribution $Y$, and 
    \begin{align*}
        a_n &= \left\{\inf x: P\left(\left|\psi_2\left(\tfrac{|X|}{\sigma_{\psi_2}}\right)-1\right|>x\right)\leq n^{-1}\right\} = \psi_2\left( \tfrac{1}{\sigma_{\psi_2}} \Phi^{-1}\left(1-\tfrac{1}{2n}\right) \right) -1, \\
        b_n&=n \E \left[ Z \cdot \mathbf{1}(|Z|\leq a_n) \right] = -n \E \left[ Z \cdot \mathbf{1}(|Z|> a_n) \right].
    \end{align*}
    The second equality for $b_n$ uses $\E(Z)=0$.
    The asymptotic expansion of $\Phi^{-1}$ presented in Lemma \ref{lem:gauss-quantiles} below
    yields
    \begin{align*}
        a_n 
        &= \exp\left[ \tfrac{3}{8} \left(\log (4n^2) - \log\log (2n) + \log\tfrac{1}{4\pi} + o(1)\right) \right]-2 \\
        &= \pi^{-3/8} n^{\frac{3}{4}} \log(2n)^{-3/8} (1+o(1)) \\
        &= \left(\pi \log n\right)^{-\frac{3}{8}} n^{\frac{3}{4}} (1+o(1)).
    \end{align*}
    We also note that $P(Z>a_n)=n^{-1}$ for any $n$ such that $a_n>1$, because $Z$ has a continuous distribution and $Z>-1$.
    
    To determine $b_n$, we define $c_n=\sigma_{\psi_2}(\psi_2-1)^{-1}(a_n)=\Phi^{-1}(1-\frac{1}{2n})$, and compute
    \begin{align*}
        \E \left[ Z \cdot \mathbf{1}(|Z|>a_n)\right] 
        &= \E\left[(Z-a_n)\mathds{1}(Z>a_n)\right] \,+\, a_n P(Z>a_n)\\
        &= \int_{a_n}^\infty P(Z>t)\, dt\,+\, \frac{a_n}{n} \quad = a_n\int_{1}^\infty P(Z>a_nt)\, dt \,+\, \frac{a_n}{n} \\
        &= a_n \int_1^\infty (a_n t)^{-\frac{4}{3}} \frac{\tilde{L}(a_nt)}{\tilde{L}(a_n)} \tilde{L}(a_n)\, dt \,+\, \frac{a_n}{n} \\
        &= a_n^{-\frac{1}{3}} \tilde{L}(a_n) \int_1^\infty t^{-\frac{4}{3}} \frac{\tilde{L}(a_nt)}{\tilde{L}(a_n)}\, dt \,+\, \frac{a_n}{n} \\
        &= a_n^{-\frac{1}{3}} \tilde{L}(a_n) \left[ \int_1^\infty t^{-\frac{4}{3}}dt + o(1) \right] \,+\, \frac{a_n}{n}\\
        &= a_n^{-\frac{1}{3}} \tilde{L}(a_n) \left[ 3 + o(1) \right] \,+\, \frac{a_n}{n},
    \end{align*}
    where the last step holds by dominated convergence and Karamata's representation theorem \cite[B.1.6]{de_haan_extreme_2006}, which states that $\tilde{L}(z)=c(z)\exp(\int_1^z a(s)/s\, ds)$ for functions $c(z)\to c_0$ and $a(z)\to 0$ as $z\to\infty$.
    Moreover,
    \begin{align*}
        a_n^{-\frac{1}{3}} \tilde{L}(a_n) = a_n P\left(\psi_2\left(\tfrac{|X|}{\sigma_{\psi_2}}\right)-1>a_n\right) = \frac{a_n}{ n},
    \end{align*}
    and thus $b_n=-4 a_n(1+o(1))$.
    Hence, we find that
    \begin{align*}
        \pi^{3/8} n^{1/4} \log(n)^{3/8}\left[\frac{1}{n} \sum_{i=1}^n Z_i\right] + 4 \;=\; 
         \frac{\sum_{i=1}^n Z_i +4a_n}{a_n(1+o(1))} 
         \;\overset{d}{\longrightarrow} \; Y. 
    \end{align*}
    As the tail of $Z_i$ is regularly varying with exponent $-\frac{4}{3}$, the limit $Y$ is $\beta$-stable for $\beta=\frac{4}{3}$ and fully right-skewed, see the proof of \cite[Thm.~3.7.2]{durrett_probability_2010}.
    In particular, its characteristic function is
    \begin{align}
        \E e^{\mathbf{i}\lambda Y} = \exp\left( \tfrac{4}{3} \int_0^\infty \left(e^{\mathbf{i}\lambda x} - 1 - \mathbf{i}\lambda x \cdot \mathds{1}_{x<1}\right)x^{\frac{4}{3}-1}\, dx \right). \label{eqn:Y-charfun}
    \end{align}
    In view of the asymptotic expansion \eqref{eqn:taylor} and the explicit value \eqref{eqn:Gauss-deriv} for the denominator, we have established the convergence $n^{1/4} \log(n)^{3/8}[\widehat{\sigma}_{\psi_2}-\sigma_{\psi_2}]\to (Y-4)/\pi^{3/8}/\sqrt{27/2}$. \qed

The following asymptotic appears to be mathematical folklore, but we were unable to find a proper reference. 
Hence, we include its derivation for completeness.

\begin{lemma}[Gaussian quantiles]\label{lem:gauss-quantiles}
    The standard Gaussian quantile function $\Phi^{-1}$ admits the asymptotic expansion
    \begin{align*}
        \Phi^{-1}(1-q) = \sqrt{\log\tfrac{1}{q^2} - \log\log \tfrac{1}{q} + \log\tfrac{1}{4\pi}  + o(1)}, \qquad \text{as } q\downarrow 0.
    \end{align*}
\end{lemma}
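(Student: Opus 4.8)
The plan is to invert the classical Mills-ratio asymptotics for the Gaussian tail by a short bootstrap argument. Writing $x = \Phi^{-1}(1-q)$, the defining relation becomes $q = 1-\Phi(x) = \overline{\Phi}(x)$, where $x\to\infty$ as $q\downarrow 0$. The standard tail expansion $\overline{\Phi}(x) = \tfrac{\phi(x)}{x}\bigl(1 + O(x^{-2})\bigr)$, with $\phi$ the standard normal density, gives after taking logarithms
\begin{align*}
    \log q = -\tfrac{x^2}{2} - \log x - \tfrac{1}{2}\log(2\pi) + O(x^{-2}),
\end{align*}
and hence
\begin{align*}
    x^2 = \log\tfrac{1}{q^2} - 2\log x - \log(2\pi) + o(1).
\end{align*}

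The main work is to resolve the $\log x$ term to precision $o(1)$, which I would do with a single iteration. To leading order the displayed identity yields $x^2 = \log\tfrac{1}{q^2}\,(1+o(1))$, so that $2\log x = \log(x^2) = \log\log\tfrac{1}{q^2} + o(1) = \log 2 + \log\log\tfrac{1}{q} + o(1)$. Crucially, since the relative correction to $x^2$ is of order $\log\log\tfrac{1}{q}\,/\,\log\tfrac{1}{q}\to 0$, it is swallowed by the outer logarithm and contributes only to the $o(1)$ remainder. Substituting back and using $\log 2 + \log(2\pi) = \log(4\pi)$ gives
\begin{align*}
    x^2 = \log\tfrac{1}{q^2} - \log\log\tfrac{1}{q} - \log(4\pi) + o(1),
\end{align*}
and taking square roots, together with $\log\tfrac{1}{4\pi} = -\log(4\pi)$, yields exactly the claimed expansion.

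The only genuine obstacle is the bookkeeping of error terms: I must check that the Mills-ratio remainder $O(x^{-2})$ translates into an additive $o(1)$ error, which holds since $x^{-2}\asymp 1/\log\tfrac{1}{q}$, and that the leading-order approximation of $x$ is sharp enough for its logarithm to be correct up to $o(1)$. Both are automatic because every correction term lies one logarithmic scale below the quantity it corrects, so no second iteration is needed. A fully rigorous variant could alternatively invoke the known higher-order asymptotics of $\Phi^{-1}$, but the self-contained two-step bootstrap above already delivers the required precision.
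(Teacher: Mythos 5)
Your proposal is correct and delivers the lemma with the right precision. The essential analytic input is the same as in the paper --- the Mills-ratio asymptotics $1-\Phi(x)=\tfrac{\varphi(x)}{x}(1+O(x^{-2}))$ --- but the inversion technique differs. You take logarithms of the defining relation $q=1-\Phi(x)$ and resolve the $2\log x$ term by a single bootstrap iteration, observing that the relative correction $\log\log\tfrac1q/\log\tfrac1q\to 0$ is absorbed by the outer logarithm; this is a direct, fixed-point-style inversion. The paper instead works with the rescaled tail $R(z)=2\sqrt{\pi}\,[1-\Phi(\sqrt{2z})]$, proposes explicit candidates $z_{\pm}(q)=\log\tfrac1q-\tfrac12\log\log\tfrac1q\pm\epsilon$, and uses the two-sided Feller bounds together with monotonicity to sandwich $R^{-1}(q)$ between $z_-(q)$ and $z_+(q)$, letting $\epsilon\downarrow 0$ at the end; the constant $\log\tfrac{1}{4\pi}$ then emerges from translating $R^{-1}$ back to $\Phi^{-1}$. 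Your route is shorter and requires only the one-sided asymptotic form of the Mills ratio, at the cost of having to track which error terms are swallowed at each logarithmic scale (which you do correctly); the paper's sandwich argument trades that bookkeeping for explicit non-asymptotic inequalities that make the $o(1)$ claim self-evidently two-sided. Both arrive at the identical expansion, including the sign of the $\log\tfrac{1}{4\pi}$ term.
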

\begin{proof}[Proof of Lemma \ref{lem:gauss-quantiles}]
    Define $R(z)=2\sqrt{\pi}[1-\Phi(\sqrt{2z})]$.
    Two-sided tail bounds for the Gaussian distribution, see \cite[Lem.~VII.2]{feller_introduction_1968}, yield
    \begin{align*}
        \left( \tfrac{1}{x}-\tfrac{1}{x^3} \right)\;\leq\; 1-\Phi(x) \;\leq\; \tfrac{1}{x} \varphi(x), \qquad x\geq 0,
    \end{align*}
    for the standard Gaussian density $\varphi$, and thus
    \begin{align*}
         \left(1-\tfrac{1}{2z}\right)\frac{1}{\sqrt{z}}e^{-z}\leq R(z) &\leq  \frac{1}{\sqrt{z}}e^{-z}, \qquad z>0.
    \end{align*}
    For any small $\epsilon>0$, and $q \in (0,1/\sqrt{2})$, define $z_{\pm}(q) = \log \frac{1}{q} - \frac{1}{2}\log \log \frac{1}{q} \pm \epsilon$.
    Then
    \begin{align*}
        R\left(z_-(q)\right) 
        \leq \frac{q \sqrt{\log (1/q)} }{\sqrt{\log (1/q) - \frac{1}{2}\log\log (1/q) -\epsilon}} e^{-\epsilon} = q (1+o(1)) e^{-\epsilon},
    \end{align*}
    which is strictly smaller than $q$ for $q<q_-(\epsilon)$, and some $q_-(\epsilon)>0$. 
    Thus, $R^{-1}(q)>z_-(q)$.
    Similarly,
    \begin{align*}
        R\left(z_+(q)\right) 
        \geq  \left(1-\tfrac{1}{2z_+(q)}\right)\frac{q \sqrt{\log (1/q)} }{\sqrt{\log (1/q) - \frac{1}{2}\log\log (1/q) -\epsilon}} e^{+\epsilon} 
        = q(1+o(1)) e^{\epsilon},
    \end{align*}
    which is strictly larger than $q$ for $q<q_+(\epsilon)$, and some $q_+(\epsilon)>0$. 
    Thus, $R^{-1}(q)<z_+(q)$.
    Since $\epsilon>0$ is arbitrary, we conclude that
    \begin{align*}
        \lim_{q\to 0} \left\{ R^{-1}(q) - \left[  \log \frac{1}{q} - \frac{1}{2}\log \log \frac{1}{q}\right]\right\}
        = 0.
    \end{align*}
    To conclude the proof, observe that 
    \begin{align*}
        \Phi^{-1}(1-q)=\sqrt{2\cdot R^{-1}\left(2\sqrt{\pi} q\right)}
        &= \sqrt{\log\tfrac{1}{q^2} - \log\left(\log \tfrac{1}{q} + \log\tfrac{1}{2\sqrt{\pi}}\right) + \log\tfrac{1}{4\pi}  + o(1)} \\
        &=\sqrt{\log\tfrac{1}{q^2} - \log\log \tfrac{1}{q} + \log\tfrac{1}{4\pi}  + o(1)}.
    \end{align*}
    
\end{proof}

\subsection*{Proof: No rate of convergence}

\begin{proof}[Proof of Theorem \ref{prop:LB}]
    Let $\gamma\in(1,2)$, and $X$ be a random variable such that
    \begin{align*}
        P(\psi(X)>t) = P(X>\psi^{-1}(t))&=\min\left[1, \left(\frac{t\gamma}{\gamma-1}\right)^{-\gamma}\right] \\
        \iff \qquad P(X>z) &= \min\left[1, \left(\frac{\psi(z)\gamma}{\gamma-1}\right)^{-\gamma}\right].
    \end{align*}
    Since $\psi$ is increasing and tends to infinity, this is a valid survival function, and $X$ is supported on $[\psi^{-1}(\frac{\gamma-1}{\gamma}),\infty)$. 
    We verify that $\|X\|_\psi=1$ as
    \begin{align*}
        \E(\psi(|X|)) 
        &= \int_0^{\infty} P(\psi(X)>t)\, dt \\
        &= \int_0^\infty \min\left[1, \left(\frac{t\gamma}{\gamma-1}\right)^{-\gamma}\right]\, dz
        \quad =1.
    \end{align*}
    An analogous calculation shows that $\E(\psi(|X|)^{\gamma^*})<\infty$ for any $\gamma^*\in (1,\gamma)$.
    For the exponential-type Orlicz functions $\psi=\psi_\alpha$, we readily check that $x\psi_\alpha'(x) \leq C \psi_\alpha(x/r) \leq \psi_\alpha(x)^{\gamma^*}$ for $r<1$ sufficiently close to $1$. 
    Hence, $G_n'(r)\to G'(r)<\infty$ almost surely, and by monotonicity of $\psi$ this convergence is locally uniform. 
    We may hence conclude as in the proof of Theorem \ref{thm:CLT} that
    \begin{align*}
        \widehat{\sigma}_{\psi_\alpha} - \sigma_{\psi_\alpha} = \frac{G_n(1)-\E(G_n(1))}{G'(1)+o_P(1)}.
    \end{align*}
    However, in contrast to Theorem \ref{thm:CLT}, the addends $\psi_\alpha(X_i)$ comprising $G_n(1)$ have regularly varying upper tails with exponent $\gamma$, and finite lower tails.
    Thus, the generalized central limit theorem \cite[IX.8]{feller1971introduction} yields convergence of $n^{1-\frac{1}{\gamma}} [G_n(1)-\E G_n(1)]$ towards a fully right-skewed $\gamma$-stable random variable. 
    In particular, the estimation error $\widehat{\sigma}_{\psi_\alpha}-\sigma_{\psi_\alpha}$ is of exact order $n^{\frac{1}{\gamma}-1}$. 
    Since $\gamma\in(1,2)$ is arbitrary, any rate $n^\beta$ with $\beta\in (0,\frac{1}{2})$ can be obtained.                        
\end{proof}

\begin{proof}[Proof of Theorem \ref{thm:uniform-lb}]
    Let $X_1,\ldots, X_n\sim X = 0$ almost surely, and $Y_1^n,\ldots, Y_n^n \sim Y^n$ where $P(Y^n=\psi^{-1}(n^2))=1/n^2$ and $P(Y^n=0) = 1-1/n^2$. 
    Then $\|X\|_\psi = 0$ and $\|Y^n\|_{\psi}=1$, but any estimator satisfies 
    \begin{align*}
        P\left(\widetilde{\sigma}_\psi(X_1,\ldots, X_n) = \widetilde{\sigma}_\psi(Y_1^n,\ldots, Y_n^n)\right) 
        \;\geq\; P\left((X_1,\ldots, X_n) = (Y_1^n,\ldots, Y_n^n)\right) \geq 1-\tfrac{1}{n}.
    \end{align*}
    Hence, the estimator can not uniformly distinguish the cases $\sigma_\psi=0$ and $\sigma_\psi=1$.
\end{proof}

\bibliography{literature.bib}
\bibliographystyle{apalike}
\end{document}